\documentclass{amsart} 
\usepackage{amsmath,amscd} 
\usepackage{latexsym,amsmath,amssymb,mathrsfs,setspace,enumerate}
\usepackage[T1]{fontenc}
\usepackage[utf8]{inputenc}
\usepackage{lmodern} 
\usepackage{amssymb}
\usepackage{enumitem}
\usepackage{tikz-cd}
\usepackage{setspace}
\usepackage{graphics}
\usepackage{graphicx}
\usepackage[utf8]{inputenc}
\usepackage[english]{babel}
\usepackage{fancyhdr}
\usepackage{tikz}
\usetikzlibrary{decorations.markings,positioning}
\theoremstyle{plain}

\newtheorem{prop}{Proposition} 

\newtheorem{exam}{Example}

\newtheorem{defn}{Definition}

\tikzcdset{%
	triple line/.code={\tikzset{%
			double distance = 3pt, 
			double=\pgfkeysvalueof{/tikz/commutative diagrams/background color}}},
	quadruple line/.code={\tikzset{%
			double distance = 5.3pt, 
			double=\pgfkeysvalueof{/tikz/commutative diagrams/background color}}},
	Rrightarrow/.code={\tikzcdset{triple line}\pgfsetarrows{tikzcd implies cap-tikzcd implies}},
	RRightarrow/.code={\tikzcdset{quadruple line}\pgfsetarrows{tikzcd implies cap-tikzcd implies}}
}

\title{Categories of bundles and categories of chains}
\author {P. G. Romeo and Riya Jose }
\address{Dept. of Mathematics, Cochin University of Science and Technology, Kochi, Kerala, INDIA.}
\email{$romeo_-parackal@yahoo.com,\, riyajosemarangattu@gmail.com $}
\subjclass{:Primary 18D70; Secondary 18F15.}
\keywords { Category, Bundle, Chains, Fibre bundle }
\thanks{} 
\date{}

\begin{document}
	
	\begin{abstract}
		K. S. S. Nambooripad introduced an interesting class of categories known as normal categories, which are categories with subobjects,  morphisms admitting factorization and having sufficiently many cones. These normal categories plays fundamental role in the study of structure of regular semigroups.
		In \cite{PGR} we discussed the category of chain bundles and category of chains.
		In the present paper revisits classical notions of bundles, including fibre bundles, vector bundles, and principal G-bundles, and discuss the chain categories arising from the category of bundles. Moreover, its is verified that these categories are categories with subobjects.  
		\end{abstract}
	\maketitle
	
Category theory provides a powerful and unifying language for studying mathematical structures and the relationships between them.  A significant development in this direction was made by K. S. S. Nambooripad, who introduced the notion of normal categories, which are categories with subobjects, morphisms admitting factorization and having sufficently many cones. 
The presence of subobjects, together with suitable factorization properties, allows for a deeper analysis of internal organization within categories and has important applications in the study of algebraic systems. In \cite{PGR}, we introduced chains as a sequence of objects in the category $\mathcal{C}$ such that for any two objects $X$ and $Y $ any map in $Hom(X, Y )$ constitute morphism between $X$ and $Y$. A category whose objects are such chains and whose morphisms are appropriate chain maps is called the category of chains. A bundle is a triple $(E,p,B)$ where $E$ and $B$ are spaces and $p:E \rightarrow B $ is a map. In this paper, we study the category of bundles and the associated chain categories arising from them. Furthermore, we verify whether  these categories admit subobjects.

	\section{Preliminaries}
	
	In the following, we briefly recall some basic notions related to categories, subobjects, and preorder.
	
	A category is mathematical structure consisting of objects (vertices) and morphisms (arrows) between them that satisfies a partial composition in which each object has an identity morphism that acts neutrally in composition.
	A structure preserving map between two categories is called a functor.
	A category $\mathcal{D}$ is a subcategory of a category $\mathcal{C}$ if the class $\mathcal{D}$ is a subclass of $\mathcal{C}$ and the composition in $\mathcal{D}$ is the restriction of the composition in $\mathcal{C}$ to $\mathcal{D}$. In this case, the inclusion $\mathcal{D} \subseteq \mathcal{C}$ preserves composition and identities and so represents a functor of $\mathcal{D}$ to $\mathcal{C}$.

A morphism $f$ in a category $\mathcal{C}$ is a monomorphism if 
$$ gf = hf \Rightarrow g = h \ \forall g,h \in \mathcal{C} $$
Every morphism in a concrete category whose underlying function is an injection is a monomorphism. A morphism $f$ in a category $\mathcal{C}$ is an \textit{epimorphism} if 
$$ fg = fh \Rightarrow g = h \ \forall g,h \in \mathcal{C} $$
Every morphism in a concrete category whose underlying function is an surjection is an epimorphism.

Let $\mathbb{M} \mathcal{C}$ denote the class of all monomorphisms in $\mathcal{C}$. On  $\mathbb{M} \mathcal{C}$, define the relation $$f \preceq g \Leftrightarrow f = hg \quad \text{for some } h \in \mathcal{C}$$
$\preceq$ is a quasi- order and  
\begin{equation}
	\sim = \preceq \cap \preceq^{-1}
\end{equation} is an equivalence relation on $\mathbb{M} \mathcal{C}$.

\begin{defn}
	A preorder $\mathcal{P}$ is a category such that for any $p, p' \in  \mathcal{P}$, $\mathcal{P}(p,p')$ contains at most one morphism. In this case, the relation $\subseteq$ on the class $\nu \mathcal{P}$ defined by 
	\begin{equation}
		p \subseteq p' \ \Leftrightarrow \ \mathcal{P}(p,p') \neq \emptyset
	\end{equation}
	is a quasiorder. When $\mathcal{P}$ is a preorder, $\nu \mathcal{P}$ will stand for the quasiordered class $(\nu \mathcal{P}, \subseteq)$. Conversely given a quasiorder $\leq$ on the class $X$, the subset $$ \mathcal{P} = \{(x,y) \in X \times X : x \leq y\}$$
	of $ X \times X $ is a preorder such that the quasiordered class $\nu \mathcal{P}$ defined above is order isomorphic with $(X,\leq)$. If the relation $\subseteq$ on $\mathcal{P}$  is antisymmetric then we shall say that $\mathcal{P}$ is a \textit{strict preorder}.
\end{defn}

\begin{defn} 
	Let $ \mathcal{C} $ be a category. A choice of subobjects in $ \mathcal{C} $ is a subcategory $ \mathcal{P} \subseteq \mathcal{C} $ satisfying the following:
	\begin{enumerate}
		\item[(a)] $\mathcal{P}$ is a strict preorder with $\nu \mathcal{P} = \nu \mathcal{C}$.
		\item[(b)] Every $f \in \mathcal{P} $ is a monomorphism in  $\mathcal{C}$.
		\item[(c)] If $f, g \in \mathcal{P}$ and if $f = hg$ for some $h \in \mathcal{C}$ then $h \in \mathcal{P}$.
	\end{enumerate}
	When $\mathcal{P}$ satisfies these conditions, the pair $ (\mathcal{C},\mathcal{P})$ is called a category with subobjects.
\end{defn}
When $\mathcal{C} $ has  subobjects, unless explicitly stated otherwise, $ \nu\mathcal{C} $ will denote the choice of subobjects in $ \mathcal{C} $. The partial order defined by equation $(2)$ is called the preorder of inclusions or subobject relation in $ \mathcal{C} $ and is denoted by $ \subseteq $. If $c,d \in \nu\mathcal{C}$ and $ c\subseteq d $ the unique morphism from $c$ to $d$ is the inclusion $j_c^d: c \rightarrow d $. 

\begin{exam}
	In categories $Set, Grp, Vect_K, Mod_R $ the relation on objects induced  by the usual set inclusion is a subobject relation.
\end{exam}

	\section{Category of Bundles}
	In this section we recall concepts of bundles with examples and the category of bundles, for a detailed discussion see \cite{Dale}. We see that category of bundles is a category with subobjects.
	
	\begin{defn}
		A bundle is a triple $(E,p,B)$ where $p: E \rightarrow B$ is a map. The space $B$ is called the base space, the space $E$ is called the total space, and the map $p$ is called the projection of the bundle. For each $b \in B$, the space $p^{-1} (b) $ is called the fibre of the bundle over $b \in B$. 
		\end{defn}

	\begin{exam}
		The product bundle over $B$ with fibre $F$ is $(B \times F, p,B)$, where $p$ is the projection on the first factor.
	\end{exam}

		A $k-$ dimensional vector bundle $\xi$ over $\textbf{F}$ is a bundle $(E,p,B)$ together with the structure of a $k-$ dimensional vector space over $\textbf{F}$ on each fibre $p^{-1} (b) $ such that the following local triviality condition is satisfied. Each point of $B$ has an open neighbourhood $U$ and a  $U-$ isomorphism $h: U \times F^k \rightarrow p^{-1}(U)$ such that the restriction $ b \times F^k \rightarrow p^{-1} (b)$ is a vector space isomorphism for each $b \in U$.
		
	\begin{exam}
		The tangent bundle over $S^n$, denoted $\tau (S^n) = (T, p, S^n)$, and the normal bundle over $S^n$, denoted $\nu (S^n) = (N, q, S^n)$, are two subbundles of the product bundle $(S^n \times \mathbb{R}^{n+1}, p, S^n)$ whose total spaces are defined by the relation $(b,x) \in T$ if and only if the inner product $(b|x) = 0$ and by $(b,x) \in N$ if and only if   $x = kb$ for some $k \in \mathbb{R}$. 
		An element $(b,x) \in T$ is called a tangent vector to $S^n$ at $b$, and an  element $(b,x) \in N$ is called a normal vector to $S^n$ at $b$. The fibres $p^{-1}(b) \subset T $ and $q^{-1}(b) \subset N $ are vector spaces of dimension $n$ and $1$, respectively.
	\end{exam}

 In the following we recall the definition of a principal $G$- bundle.
	\begin{defn} 
		Let $G$ be a topological group. A principal $G-$ bundle is a map $p: E \rightarrow B$ together with a right action $r: E \times G \rightarrow E$ satisfying the following conditions:\\
		$ 1)\, $ For every $ x \in E $ and $g \in G$, $p(xg) = p(x)$. i.e., $p$ induces a homeomorphism between quotient spaces of this action and $B$.\\
		$2)\, $ For every $b \in B, \, \exists $ an open neighbourhood $U$ and a $G- $ homeomorphism $\phi : p^{-1}(U) \rightarrow U \times G$ (where $G$ acts on $p^{-1}(U)$ by restriction of $r$ on $U \times G$ by $((u,x),g)\longmapsto (u, xg)$) such that following diagram commutes:
		\begin{center}  
			\begin{tikzcd} 
				p{^{-1}(U)} \arrow["\phi "]{r} \arrow["p"]{d} & U \times G  \arrow{dl}{Pr_U}
				\\
				U \\
			\end{tikzcd}		
		\end{center}
	\end{defn}
	\begin{exam}
		Let $B$ be the unit circle $S^1$ parameterized by the angle $\theta$. The total space $E$ is simply the product space $S^1 \times G$, where 
		$G$ is any Lie group. And for simplicity, let's take $G = \mathbb{R}$. The projection map $p : E \rightarrow B$ is defined as $p(\theta, g) = \theta$, where $(\theta, g) \in S^1 \times \mathbb{R}$. The fibre $F$ over each point $b\in B$ is the copy of 
		$G$ at that point.
		In this case, the fibre $F$ over each point 
		$ \theta \in S^1$ is just $G = \mathbb{R}$. The action of the Lie group $G = \mathbb{R}$ on the fibre $F = \mathbb{R}$ is given by addition: $g.f = g + f $, for $g, f \in \mathbb{R}$.\\
		This bundle is called trivial because the total space $E$ is simply the product space $S^1 \times \mathbb{R}$, and each fibre is isomorphic to the structure group $G = \mathbb{R}$. The projection map $p$ is also straightforward, projecting each point to its first component on the base space $S^1$.
	\end{exam}
	\begin{exam}
		The frame bundle of a smooth $n$-dimensional manifold $M$, is a principal $GL(n,\mathbb{R})$-bundle. It consists of all ordered bases (frames) of the tangent spaces of $M$.
		Here the total space is\\ $E = \underset{x\in M}{\cup}\{(x, e_1, e_2, \cdots, e_n)| \{e_1\}_{i=1}^n \textnormal{is a basis of tangent space} \hspace{2mm} T_xM\}$.\\
		Base space is $B = M$. The general linear group \, which represents the set of all changes of basis in $\mathbb{R}^n$ is the fibre. The projection $\pi : E \rightarrow B$ is given by $\pi(x, e_1, e_2, \cdots, e_n)=x$. The general linear group GL(n,R) acts freely and transitively on the fibres. 
	\end{exam}

	A space $\textbf{F}$ is the fibre of a bundle  $(E, p, B)$ if every $p^{-1}(b)$ for $b \in B$ is homeomorphic to $\textbf{F}$.  
	Product bundles, tangent bundles, frame bundles discussed above are examples whose fibres are $F$, vector spaces, general linear groups respectively.

	\begin{defn}
		Let $(E, p, B)$ and $(E',p',B')$ be two bundles. A bundle morphism $(u,f) : (E, p, B) \rightarrow (E',p',B')$ is a pair of maps $u: E \rightarrow E'$ and $f: B \rightarrow B'$ such that $up' = pf$.
	\end{defn}
	For two bundles over same base $B$, ie., for  $(E, p, B)$ and $(E',p',B)$ a bundle morphism over $B$ (or $B-$ morphism) $u : (E, p, B) \rightarrow (E',p',B)$ is a map $u: E \rightarrow E'$ such that $up' = p$.	
	In case of morphism of vector bundles the restriction $u: p^{-1}(b) \rightarrow (p')^{-1}(f(b))$ is linear for each $b \in B$ and for $B-$ morphism the restriction $u: p^{-1}(b) \rightarrow (p')^{-1}(b)$ is linear for each $b \in B$.\\
	
	Now, consider a family of bundles $$\{(E, p, B) : E,B \,\text{are total and base spaces, $p$ bundle projection}\}$$
	with the set of all bundle morphisms $(u,f) : (E, p, B) \rightarrow (E',p',B')$ with composition of morphisms  $(u,f) : (E, p, B) \rightarrow (E',p',B')$ and $(u',f') : (E',p',B') \rightarrow (E'',p'',B'')$ 
	 is given by the following commutative diagram: 
	\begin{center}
		\begin{tikzcd}
			E  \arrow["u"]{r}\arrow{d}{p} & E' \arrow["u'"]{r}\arrow{d}{p'} & E'' \arrow{d}{p''}\\
			B  \arrow["f"]{r} & B' \arrow["f'"]{r} & B''\\
		\end{tikzcd}
	\end{center}
	Consequently, composite is the bundle morphism $(uu', ff'):  (E, p, B) \rightarrow (E'',p'',B'')$. The pair $(1_E,1_B):(E, p, B) \rightarrow (E, p, B)$ is a bundle morphism and is the identity morphism. It can be easily seen that all axioms of a category are satisfied.  
	 \begin{prop}
	 	All bundles of the form  $(E, p, B)$ and bundle morphisms forms a category called category of bundles, denoted \textbf{Bun}.
	 \end{prop}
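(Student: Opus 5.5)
The plan is to verify the category axioms directly for the data described just before the statement: objects are bundles $(E,p,B)$ and morphisms are bundle morphisms $(u,f)$. The paper writes composition in diagrammatic order, so $uu'$ means ``first $u$, then $u'$''. In that convention the defining condition $up' = pf$ says that the square formed by $u$, $p$, $p'$, $f$ commutes.

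The steps I would carry out, in order, are the following.
\begin{enumerate}
\item[(i)] \emph{Composition is well defined.} Given $(u,f):(E,p,B)\to(E',p',B')$ and $(u',f'):(E',p',B')\to(E'',p'',B'')$, I must show that $(uu',ff')$ is again a bundle morphism, that is, $(uu')p'' = p(ff')$. Using associativity of composition of maps together with the two given squares, I get
$$(uu')p'' = u(u'p'') = u(p'f') = (up')f' = (pf)f' = p(ff').$$
This is exactly the statement that the outer rectangle of the displayed diagram commutes.
\item[(ii)] \emph{Identities.} The square $1_E\,p = p\,1_B$ holds trivially, so $(1_E,1_B)$ is a morphism. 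For any $(u,f)$ with domain $(E,p,B)$ we have $(1_E u, 1_B f) = (u,f)$, and similarly on the codomain side. This holds componentwise because $1_E$ and $1_B$ are identities for composition of maps.
\item[(iii)] \emph{Associativity.} Composition of bundle morphisms is defined componentwise, so associativity follows from associativity of composition of maps in each component separately:
$$((uu')u'', (ff')f'') = (u(u'u''), f(f'f'')).$$
\item[(iv)] \emph{Hom-classes.} Morphisms should have well-defined domains and codomains. I would treat a morphism formally as the triple consisting of its source bundle, its target bundle and the pair $(u,f)$, so that hom-classes for distinct pairs of objects are disjoint.
\end{enumerate}

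None of these steps is a real obstacle, since everything is inherited from the underlying category of spaces and maps. The only point needing care is step (i), and specifically keeping the composition convention consistent. The paper's condition $up' = pf$ only makes sense in diagrammatic order, so I would state that convention explicitly at the outset and use it throughout. If ``space'' and ``map'' are read as topological spaces and continuous maps, I would also note that composites and identities of continuous maps are continuous, so the same argument works verbatim.
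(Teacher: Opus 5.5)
Your proposal is correct and follows essentially the same route as the paper. The paper defines the composite $(uu',ff')$ by pasting the two commuting squares, takes $(1_E,1_B)$ as the identity, and asserts that the axioms are easily checked; you make explicit the computation that the outer rectangle commutes, the componentwise identity and associativity laws, and the diagrammatic-order convention and hom-class disjointness bookkeeping that the paper leaves implicit.
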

 A bundle morphism $(u,f) : (E, p, B) \rightarrow (E',p',B')$  is an isomorphism if and only if there exists a morphism  $(u',f') : (E',p',B') \rightarrow (E, p, B)$ such that $f'f = 1_B, ff' = 1_B, u'u = 1_E$ and $uu' = 1_E$.
	
	\begin{defn}(cf.\cite{Dale}) \label{subbundle}
	A bundle $(E',p',B')$ is a subbundle of $(E,p,B)$ provided $E'$ is a subspace of $E$, $B'$ is a subspace of $B$, and $p' = p|E' : E' \rightarrow B'$.
\end{defn}
\begin{center}
	\begin{tikzcd}
		E'  \arrow["p'"]{r}\arrow{d}{j_{E'}^{E}} & B'\arrow{d}{j_{B'}^{B}} \\
		E  \arrow["p"]{r} & B \\
	\end{tikzcd}
\end{center}
The bundle morphism $(j_{E'}^{E}, j_{B'}^{B})$ is the inclusion. Being a subbundle is a strict preorder on the vertex class $\nu \textbf{Bun}$.
\begin{prop}
	 The category of bundles \textbf{Bun} with being a subbundle as the choice of subobjects $\mathcal {P}$, is a category with subobjects (\textbf{Bun}, $\mathcal {P}$).
\end{prop}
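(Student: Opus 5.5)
We verify directly the three conditions (a), (b), (c) in the definition of a choice of subobjects. Throughout we take $\mathcal{P}$ to be the subcategory of \textbf{Bun} with $\nu\mathcal{P}=\nu\textbf{Bun}$ whose morphisms are exactly the inclusions $(j_{E'}^{E}, j_{B'}^{B})$ for subbundles $(E',p',B')$ of $(E,p,B)$. We use the paper's right-to-left (diagrammatic) convention for composition, so a bundle morphism $(u,f)$ satisfies $up'=pf$.

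\emph{Step 1: $\mathcal{P}$ is a subcategory.} The identity $(1_E,1_B)$ is the inclusion of $(E,p,B)$ into itself. Suppose $(E'',p'',B'')$ is a subbundle of $(E',p',B')$, which is in turn a subbundle of $(E,p,B)$. Since a subspace of a subspace is a subspace, $E''\subseteq E$ and $B''\subseteq B$. Moreover $p''=p'|E''=(p|E')|E''=p|E''$. The composite of the two inclusions is the pair of composite set inclusions, and this is the inclusion $(j_{E''}^{E}, j_{B''}^{B})$. So $\mathcal{P}$ is closed under composition and identities.

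\emph{Step 2: condition (a).} Between two bundles there is at most one inclusion, because its components are forced to be the inclusion maps. Hence $\mathcal{P}$ is a preorder. For strictness, suppose $(E',p',B')$ and $(E,p,B)$ are each subbundles of the other. Then $E'\subseteq E\subseteq E'$ and $B'\subseteq B\subseteq B'$ as subspaces, so $E=E'$ and $B=B'$ with the same topologies, and $p'=p|E'=p$. The two bundles therefore coincide, and the relation is antisymmetric.

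\emph{Step 3: condition (b).} Suppose $(j_{E'}^{E}, j_{B'}^{B})$ is an inclusion and $(g_1,h_1),(g_2,h_2):(X,q,Y)\to(E',p',B')$ are bundle morphisms whose composites with the inclusion agree. Composition in \textbf{Bun} is componentwise, and $j_{E'}^{E}$ and $j_{B'}^{B}$ are injective maps, hence monomorphisms of the underlying maps. So $g_1=g_2$ and $h_1=h_2$, and the inclusion is a monomorphism in \textbf{Bun}.

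\emph{Step 4: condition (c).} This is the step needing the most care. Let $f=(j_{E_1}^{E}, j_{B_1}^{B})$ and $g=(j_{E_2}^{E}, j_{B_2}^{B})$ be inclusions into a common bundle $(E,p,B)$, and suppose $f=hg$ for some $h=(u,k):(E_1,p_1,B_1)\to(E_2,p_2,B_2)$. Comparing components gives $u\,j_{E_2}^{E}=j_{E_1}^{E}$, so for every $x\in E_1$ we have $u(x)=x$. Thus $E_1\subseteq E_2$ and $u$ is the set inclusion. By the same argument $B_1\subseteq B_2$ and $k$ is the set inclusion. Both $E_1$ and $E_2$ carry the subspace topology from $E$, so $E_1$ carries the subspace topology from $E_2$; the same holds for $B_1$ inside $B_2$. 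Finally, $p_1=p|E_1=(p|E_2)|E_1=p_2|E_1$. Hence $(E_1,p_1,B_1)$ is a subbundle of $(E_2,p_2,B_2)$ and $h$ is the corresponding inclusion, so $h\in\mathcal{P}$.

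The only subtle point is the topology in Step 4, which is handled by transitivity of the subspace topology. Together, Steps 1 to 4 establish (a), (b) and (c), so $(\textbf{Bun},\mathcal{P})$ is a category with subobjects.
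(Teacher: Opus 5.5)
Your proof is correct and takes the same route as the paper: a direct check of (a), (b) and (c), where (c) comes from comparing the components of $f=hg$ with the inclusions into the common bundle. Your version is more careful than the paper's, which writes $j_{E''}^{E'}$ before establishing $E''\subseteq E'$ and only asserts strictness and closure of $\mathcal{P}$ under composition; you instead derive the containment elementwise from $u(x)=x$, and you also check the subspace topology and the restriction of the projection.
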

\begin{proof} Since every bundle is a subbundle of itself and $\mathcal {P}$ being the choice of subbundles we have $\nu \mathcal {P} = \nu \textbf{Bun}$ and morphisms in $\mathcal {P}$  are limited to the canonical inclusions [ see Definition \ref{subbundle}].
	 Then $(\textbf{Bun},\mathcal {P} )$ is a category with subobjects, since 
	\begin{enumerate}
		\item $\mathcal {P}$ forms a strict preorder with $\nu \mathcal {P} = \nu \textbf{Bun}$.
		\item Every morphism  $(j_{E'}^{E}, j_{B'}^{B}) \in \mathcal {P}$ is a monomorphism being pair of inclusions.
		\item If $ (u, f) = (u'', f'')(u', f') $ where $ (u, f), (u', f') \in  \mathcal {P}$, then $(u'', f'') \in  \mathcal {P}$.\\
		    For, 
		    \begin{center}
		    	\begin{tikzcd}
		    		E''  \arrow["p''"]{r}\arrow{d}{u''} & B''\arrow{d}{f''} \\
		    		E'  \arrow["p'"]{r}\arrow{d}{j_{E'}^{E}} & B'\arrow{d}{j_{B'}^{B}} \\
		    		E  \arrow["p"]{r} & B \\
		    	\end{tikzcd}
		    \end{center}
	    \begin{center}
	    	\begin{tikzcd}
	    		E''  \arrow["p''"]{r}\arrow{d}{j_{E''}^{E}} & B''\arrow{d}{j_{B''}^{B}} \\
	    		E  \arrow["p"]{r} & B \\
	    	\end{tikzcd}
	    \end{center}
    If $ (u, f) = (u'', f'')(u', f') $, then we have $j_{E''}^{E} = u''j_{E'}^{E}$ and $j_{B''}^{B} = f'' j_{B'}^{B}$.\\
    $j_{E''}^{E} = u''j_{E'}^{E} \implies j_{E''}^{E'}j_{E'}^{E} = u''j_{E'}^{E} \implies j_{E''}^{E'} = u''$. Similarly we get, $ j_{B''}^{B'} = f''$. Thus, $(E'',p'',B'')$ becomes a  subbundle of $(E', p', B')$ and hence $(u'', f'') = (j_{E''}^{E'},j_{B''}^{B'} ) \in \mathcal {P}$.
	\end{enumerate}
\end{proof}
\section{Category of chains from bundles}
Consider $\xi_i = E_i \stackrel{p_i}{\rightarrow}  B_i$ of triples $(E_i, p_i, B_i)$ where $p_i$ is a map, $B_i$ is the base space and $E_i$ is the total space. Then the category $\textbf{Bun} $ of bundles is the category whose objects and morphisms are bundles $\xi_i = E_i \stackrel{p_i}{\rightarrow}  B_i$ and bundle morphisms $s_i = (u_i, f_i)$. One remarkable thing regarding the category $\textbf{Bun} $ is that there are many chain  categories associated with them: viz., chains of bundles, chains of fibres and chains of long exact sequences. 
\subsection{Chains of bundles}
A chain of bundles \index{chain of bundles} is an indexed family of bundles and bundle morphisms of the form $\cdots \rightarrow (E_i, p_i, B_i) \stackrel{s_i}{\rightarrow} (E_{i+1}, p_{i+1} ,B_{i+1})  \rightarrow \cdots$
where $(E_i, p_i, B_i)$ are bundles and $s_i = (u_i, f_i)$ is the bundle map between bundles, and the composition of morphisms is there wherever possible. It is easy to see that the chain of bundles is a subcategory of $\textbf{Bun}$.
\begin{exam}
	Consider the group $G =\mathbb{Z}$. Consider the normal series $\mathbb{Z} \supset 9\mathbb{Z} \supset 45 \mathbb{Z} \supset 180\mathbb{Z} \supset \{\textbf{0}\}$.
	$\mathbb{Z}$ is an $n\mathbb{Z}$ bundle over $\mathbb{Z}/n\mathbb{Z}$.	We have the following sequence of principal group bundles: 
	\begin{center} \small
		\begin{tikzcd} 
		\mathbb{Z}	\arrow{d} {j}  \arrow["p_1"]{r} & \mathbb{Z}/180\mathbb{Z} \arrow{d} {f_1}\\
		\mathbb{Z}	\arrow{d} {j}  \arrow["p_2"]{r} & \mathbb{Z}/45\mathbb{Z} \arrow{d} {f_2}\\
		\mathbb{Z}	\arrow{d} {j}  \arrow["p_3"]{r} & \mathbb{Z}/9\mathbb{Z} \arrow{d} {f_3}\\
		\mathbb{Z}	   \arrow["p_4"]{r} & \mathbb{Z}/\{\textbf{0}\} \\
		\end{tikzcd}
	\end{center}
	Here $j$ denotes inclusion,\\ $p_i: \mathbb{Z} \rightarrow \mathbb{Z}/n_i\mathbb{Z} $ is defined by $p_i(z) = z + n_i \mathbb{Z} $ and\\ $f_i: \mathbb{Z}/n\mathbb{Z} \rightarrow \mathbb{Z}/m\mathbb{Z} $ is defined by $f_i(z+ n \mathbb{Z}) = z+ m \mathbb{Z}$.
\end{exam}

Next, we describe the category of chains of bundles. We begin by defining morphisms between chains of bundles. Consider two chains of bundles $c $ and $d $ given below.\\
\begin{center}
	\begin{tikzcd} 
	\vdots  \arrow{d} &    \vdots \arrow{d} & & \vdots \arrow{d} &    \vdots \arrow{d}\\
	E_i \arrow{d} {u_i}  \arrow["p_i"]{r} & B_i \arrow{d} {f_i} & & E_i' \arrow{d} {u_i'}  \arrow["p_i'"]{r} & B_i' \arrow{d} {f_i'} \\
	E_{i+1} \arrow{d}{u_{i+1}}  \arrow["p_{i+1}"]{r} & B_{i+1} \arrow{d}{f_{i+1}} & & E_{i+1}' \arrow{d}{u_{i+1}'}  \arrow["p_{i+1}'"]{r} & B_{i+1}' \arrow{d}{f_{i+1}'} \\
	\vdots  &   \vdots & &\vdots  &   \vdots    \\
	\end{tikzcd}\\
	$c$ \hspace{4.2cm}  $d$\\
	
	A morphism between these two chains of bundles is a map $m : c \rightarrow d$ between them with $m= (g_i, h_i)$ where $g_i: E_i \rightarrow E_i'$ and $h_i: B_i \rightarrow B_i'$ 
	
	\begin{tikzcd}[row sep=1.5em, column sep = 1.5em]
	E_i \arrow[rr, dashed] \arrow[dr,swap,"g_i"] \arrow[dd,swap,"u_i"] &&
	B_i \arrow[dd, "f_i"] \arrow[dr,"h_i"] \\
	& E_i' \arrow[rr, dashed] \arrow[dd,"u_i'"] &&
	B_i' \arrow[dd,"f_i'"] \\
	E_{i+1} \arrow[rr,dashed] \arrow[dr, "g_{i+1}"] && B_{i+1} \arrow[dr, "h_{i+1}"] \\
	& E_{i+1}' \arrow[rr, dashed] && B_{i+1}'
	\end{tikzcd}
\end{center}
such that following conditions holds:\\
\begin{equation} \label{A}
g_iu_i' = u_i g_{i+1}\quad \text{and} \quad f_ih_{i+1} = h_i f_i' 
\end{equation}
It is easily seen that these conditions holds when $(g_i,f_i)$ are bundle morphisms between the bundles $E_i  \stackrel{p_i}{\rightarrow} B_i$ and $E_i' \stackrel{p_i'}{\rightarrow}  B_i'$ for all $i$.
With objects as sequences of bundles and morphisms between them as defined above and the composition of two composible morphisms  $m= (g_i, h_i): c \rightarrow d$ and $n= (g_i', h_i'): d \rightarrow e$ is between $c$ and $e$ given by $mn = (g_ig_i', h_ih_i')$. Using (\ref{A}) it can be seen that $mn$ is a morphism. The identity morphism is $(1_{E_i}, 1_{B_i})$ and all axioms of category holds good. Hence sequence of bundles with bundle morphisms between them forms a chain category. 

Now we define a subchain of a bundle of chains and see that the above chain category is a category with subobjects.
Consider two chains of bundles $c$ and $d$ where\\
 $$c := \cdots \stackrel{(u_{i-1}, f_{i-1})}{\rightarrow} (E_i, p_i, B_i)  \stackrel{(u_{i}, f_{i})}{\rightarrow}(E_{i+1}, p_{i+1}, B_{i+1})  \stackrel{(u_{i+1}, f_{i+1})}{\rightarrow} \cdots $$ and $$d := \cdots \stackrel{(u_{i-1}', f_{i-1}')}{\rightarrow} (E_i', p_i', B_i')  \stackrel{(u_{i}', f_{i}')}{\rightarrow}(E_{i+1}', p_{i+1}', B_{i+1}')  \stackrel{(u_{i+1}', f_{i+1}')}{\rightarrow}\cdots  $$
 Then $c$ is said to be a subchain of $d$ if $(E_i, p_i, B_i)$ is a subbundle of $(E_i', p_i', B_i')$ and the following two conditions holds for all $i$:
 $$ j_{E_i}^{E_i'} u_i' = u_ij_{E_{i+1}}^{E_{i+1}'} \quad \text{and} \quad f_i j_{B_{i+1}}^{B_{i+1}'} =  j_{B_i}^{B_i'}f_i'  $$.
 The inclusion morphism $j_c^d$ is given by $j_c^d = ( j_{E_i}^{E_i'}, j_{B_i}^{B_i'} )$.\\
 It can be seen that above two conditions holds if the bundle maps $p_i'$ of chain $d$ are all monomorphisms and the bundle maps $p_i$ of chain $c$ are all epimorphisms.
 Thus it can be seen that the category of chains of bundles forms a category with subobjects where subobjects are precisely subchains defined as above.\\
 

Recall a fibre bundle (see \cite{Saun}). 	For manifolds $E$ and $M$, let $\pi: E \rightarrow M $ be a surjective submersion, then the triple $(E, \pi, M)$ is known as a  fibred manifold. A locally trivial fibred manifold will be a fibre bundle. A  section \index{section} of the bundle $(E, \pi, M)$ is a map $\phi: M \rightarrow E$ satisfying $\phi \circ \pi = id_M$. A local section \index{local section} of the bundle is a section of the subbundle $(\pi^{-1}(U), \pi|_{\pi^{-1}(U)}, U)$, where $U$ is a submanifold of $M$. Consider bundles $(E,\pi, M)$ and $(F,\rho, N)$ and let $\phi: M \rightarrow E$ be a section of $\pi$. If $(f,\bar{f})$ is a bundle morphism from  $(E,\pi, M)$ to $(F,\rho, N)$ with additional property that it projects to diffeomorphism. i.e., $\bar{f}: M \rightarrow N$ is a diffeomorphism, then $\tilde{f}(\phi) = \bar{f}^{-1} \circ \phi \circ f$ is a section of $\rho$.
It can be seen that collection of all fibred manifolds satisfying local triviality condition and all bundle morphisms between such bundles which projects to a diffeomorphism forms a category.\\
Next, we present an example of a category of chains of bundles, namely the category of chains of jet bundles. The relevant notions are taken from \cite{Saun}.
\begin{exam}
	Let $(E,\pi, M)$ be a bundle. Denote $\Gamma(\pi)=$ set of all sections of $\pi$ and $\Gamma(\pi)_x=$ set of all local sections of $\pi$ whose domain include $ x \in M$.
	Let $\phi \in \Gamma(\pi)_x$ and $k \in \mathbb{N}$. $\psi \in \Gamma(\pi)_x$ is said to be equivalent to $\phi$ to order $k$ if for every $f \in C^{\infty}(E)$ and every smooth curve $\gamma: \mathbb{R} \rightarrow M$ satisfying $\gamma(0) = x$,
	$$\frac{d^r}{dt^r}_{|t=0} (t \mapsto \gamma \circ \phi \circ f (t)) = \frac{d^r}{dt^r}_{|t=0} (t \mapsto \gamma \circ \psi \circ f (t)) $$ whenever $0 \leq r \leq k$.
	\par{}
	The $k-$ jet of $\phi$ at $x$ is the set \index{$j_x^k\phi$}$$ j_x^k\phi = \{\psi \in \Gamma(\pi)_x| \, \psi \, \text{is equivalent to} \, \phi \, \text{to order}\, k \}.$$

	\index{$J^k(\pi)$}	$$ J^k(\pi) = \{ j_x^k\phi | \, \phi \in \Gamma(\pi)_x, x \in dom(\phi) \}$$ and 
	\index{$J_x^k(\pi)$}	$$J_x^k(\pi) = \{ j_x^k\phi | \, \phi \in \Gamma(\pi)_x \}$$
	Then \[ J^k(\pi) = \bigsqcup_{x \in M} J^k_x(\pi)\]

	Let $\pi_k^{k-1}: J^k(\pi) \rightarrow J^{k-1}(\pi)$ by $\pi_k^{k-1}(j_x^k\phi) = j_x^{k-1} \phi$. For $k-1 > l, l \geq 0$, define $\pi_k^l: J^k(\pi) \rightarrow J^l(\pi)$ recursively by $$ \pi_k^l = \pi_k^{k-1} \circ \pi_{k-1}^l.$$ For $k > l \geq 0, (J^k(\pi), \pi_k^l, J^l(\pi))$ is a bundle. 	Note that $J^0(\pi) = E$ and $\pi_k^0: J^k(\pi) \rightarrow J^0(\pi) = E$.\\
	Define $\pi_k: J^k(\pi) \rightarrow M$ by $\pi_k = \pi_k^0 \circ \pi$. Then $(J^k(\pi), \pi_k, M)$ is a fibre bundle where $\pi_k^{-1}(x) = J_x^k(\pi)$ and is termed as a \index{jet bundle} jet bundle. For the bundles $(E, \pi, M)$ and $ (F, \rho, N)$  and the bundle morphism $(f,\bar{f})$  where $\bar{f}$ is a diffeomorphism, the $k^{th}$ prolongation of $f$ is the map $j^kf = j^k(f,\bar{f}): J^k(\pi) \rightarrow J^k(\rho)$ defined by $$j^kf(j^k_x \phi) = j^k(f,\bar{f})(j^k_x \phi) = j^k_{\bar{f}(x)}\tilde{f}(\phi).$$
	The following are bundle morphisms:\\
	$$(j^kf,f): (J^k(\pi), \pi_k^0, E) \rightarrow (J^k(\rho), \rho_k^0, F), $$  $$(j^kf,\bar{f}): (J^k(\pi), \pi_k, M) \rightarrow (J^k(\rho), \rho_k, N), \, \text{and}$$   $$(j^kf,j^{k-1}f): (J^k(\pi), \pi_k^{k-1}, J^{k-1}(\pi)) \rightarrow (J^k(\rho), \rho_k^{k-1},J^{k-1}(\rho))$$ 
	
	Now, consider the bundle maps $f: (E, \pi, M) \rightarrow (F, \rho, N) \, \text{and} \, g: (F, \rho, N) \rightarrow (K, \sigma, L)$  which projects to diffeomorphisms. Then
	$$j^k(f \circ g, \overline{f \circ g}) = j^k(f, \bar{f}) \circ j^k(g, \bar{g})$$ and $$ j^k(id_E,id_M) = id_{J^k(\pi)}.$$
	\\  
	We get a sequence of jet bundles from the locally trivial fibred manifold $(E, \pi, M)$ as follows:
	$$\cdots \rightarrow J^k(\pi) \stackrel{\pi_k^{k-1}}{\rightarrow}
	J^{k-1}(\pi) \stackrel{\pi_{k-1}^{k-2}}{\rightarrow} \cdots \stackrel{\pi_{2}^{1}}{\rightarrow}J^1(\pi) \stackrel{\pi_{1}^{0}}{\rightarrow} J^0(\pi) = E  \stackrel{\pi}{\rightarrow} M $$
	In particular we illustrate above example when $M = \mathbb{R}^2, E = M \times \mathbb{R}$. Consider the trivial bundle $(E, \pi, M)$. Sections of $\pi$ are just smooth functions $f:\mathbb{R}^2 \rightarrow\mathbb{R}$.\\
	A section $ u = f(x,y) \in C^{\infty}(M)$ has derivatives of
	\begin{enumerate}
		\item zeroth order: $u = f(x,y)$
		\item first order: $u_x, u_y$
		\item second order: $u_{xx}, u_{yy},u_{xy}.$
	\end{enumerate}
	If $(x,y) $ are coordinates in $M$, then\\
	\begin{itemize}
		\item	E has coordinates: $(x,y,u)$\\
		\item	$J^1(\pi)$ has coordinates: $(x,y,u,u_x,u_y)$  and \\
		\item	$J^2(\pi)$ has coordinates: $(x,y,u,u_x,u_y,u_{xx}, u_{yy},u_{xy})$.\\
	\end{itemize}
	We have the chain $$ J^2(\pi) \stackrel{\pi_2^1}{\rightarrow} J^1(\pi)  \stackrel{\pi_1^0}{\rightarrow} E  \stackrel{\pi}{\rightarrow} M$$
	$$ \text{where} \quad
	\pi_2^1(x,y,u,u_x,u_y,u_{xx}, u_{yy},u_{xy}) = (x,y,u,u_x,u_y) \,\,\, \text{and}$$ $$
	\pi_1^0(x,y,u,u_x,u_y) = (x,y,u).$$
	
	If $\cdots \rightarrow J^k(\pi) \stackrel{\pi_k^{k-1}}{\rightarrow}
	J^{k-1}(\pi) \stackrel{\pi_{k-1}^{k-2}}{\rightarrow} \cdots \stackrel{\pi_{2}^{1}}{\rightarrow}J^1(\pi) \stackrel{\pi_{1}^{0}}{\rightarrow} J^0(\pi) = E  \stackrel{\pi}{\rightarrow} M $ and $\cdots \rightarrow J^k(\rho) \stackrel{\rho_k^{k-1}}{\rightarrow}
	J^{k-1}(\rho) \stackrel{\rho_{k-1}^{k-2}}{\rightarrow} \cdots \stackrel{\rho_{2}^{1}}{\rightarrow}J^1(\rho) \stackrel{\rho_{1}^{0}}{\rightarrow} J^0(\rho) = F  \stackrel{\rho}{\rightarrow} N $ are two chains of jet bundles and if there is bundle morphism $(f, \bar{f})$ that project to diffeomorphism between $(E, \pi,M)$ and $(F, \rho, N)$, then we get a chain map between  above two chains of bundles as follows:\\
	\begin{center}
		\begin{tikzcd}  
		\cdots \arrow[" "]{r} & J^k(\pi)\arrow["\pi_k^{k-1}"]{r} \arrow{d}{j^kf} & J^{k-1}(\pi) \arrow["\pi_{k-1}^{k-2}"]{r} \arrow{d}{j^{k-1}f} & \cdots \arrow["\pi_1^0" ]{r} & E \arrow["\pi"]{r} \arrow{d}{f} & M \arrow{d}{\bar{f}}\\
		\cdots \arrow[" " ]{r} & J^k(\rho)\arrow["\rho_k^{k-1}"]{r}   & J^{k-1}(\rho) \arrow["\rho_{k-1}^{k-2}"]{r}   & \cdots \arrow["\rho_1^0" ]{r} & F \arrow["\rho"]{r}   & N  
		\end{tikzcd}
	\end{center}  
	Based on previous discussions, it can be seen that composition of two such chain maps is possible and the collection of chains of jet bundles with morphisms as chain map defined above form a category of chains of bundles.\\
\end{exam}
\subsection{Chains of fibres}
Consider the bundle $p: E \rightarrow B$ which consists of a neighbourhood $U$ at each point of $B$ for which a homeomorphism $h: p^{-1}(U) \rightarrow U \times F$ can be defined such that the following diagram commutes.
\begin{center}  
	\begin{tikzcd} 
	p{^{-1}(U)} \arrow["h "]{r} \arrow["p"]{d} & U \times F  \arrow{dl}{}
	\\
	U \\
	\end{tikzcd}		
\end{center}
See \cite{Dale}. Then,  $p^{-1}(U) \rightarrow F $ which is a homeomorphism on each fibre and $p: E \rightarrow B$, is a \index{fibre bundle} fibre bundle. Consider subbundles of this fibre bundle, and by taking fibres of this bundle and its subbundles as the objects and morphisms are the maps between fibres that preserves structure of fibres we obtain the category of fibres  associated with this fibre bundle and it is denoted by $\mathcal{FC}_p $. If $(E', p', B')$ is a subbundle of $(E, p, B)$, then we have $E' \subseteq E$, $B' \subseteq B$ and $p' = p|_{E'} $. It can be seen that $(p')^{-1}(b) \subseteq p^{-1}(b) $. Thus if $(E', p', B')$ is a subbundle of $(E, p, B)$ then fibres of $(E', p', B')$ will be subfibres of $(E, p, B)$. Hence $\mathcal{FC}_p $ is a category with subobjects where subfibres are the subobjects.

Consider $(E_1, p_1, B_1) \subset (E_2, p_2, B_2) \subset \cdots \subset (E_n, p_n, B_n) = (E, p, B) $ be a chain of subbundles in \textbf{Bun} where $B_1 \subset B_2 \subset \cdots \subset B_n =B$. For each $b \in B_1$ we can form a chain $$ p_1^{-1}(b) \stackrel{s_1} \rightarrow p_2^{-1}(b) \stackrel{s_2} \rightarrow \cdots \stackrel{s_{n-1}} \rightarrow  p_n^{-1}(b)= p^{-1}{(b)}$$ where $s_i$ is a morphism in the homset $Hom(p_i^{-1}(b), p_{i+1}^{-1}(b))$ in the category $\mathcal{FC}_p $ for all $i$, which also include identity morphism on $p_i^{-1}(b)$ and all possible composite of morphisms. Now it is easy to see that these \index{chains of fibres} chains of fibres forms  a category of chains  in $\mathcal{FC}_p $. But in this category it is not always possible to define subobject relation because chains of fibres corresponding to different base points are not related by canonical inclusion maps, since fibres over distinct points can be disjoint and admit no natural inclusion between them.
\begin{exam}
	We may consider a vector bundle $p: E \rightarrow B$. Then, in the category $\mathcal{FC}_p $, objects are fibres of the vector bundle and its subbundles and morphisms are structure preserving maps between them. If dimension of $B$ is $n$, then the objects are vector spaces of dimension less than or equal to $n$ and morphisms are linear maps between them.   
	A chain in this case will be a sequence of vector spaces and $s_i$'s are linear maps between the corresponding fibres. \\
\end{exam}
\subsection{Category of long exact sequences arising from fibrations}
Long exact sequences plays a central role in algebraic topology and homological algebra, enabling a way to systematically analyze and relate algebraic invariants. They occur naturally in various contexts, such as chain complexes, fibrations and cohomology theories. \\
Here we see long exact sequences arising from a fibre bundle and recall some basic notions and results regarding this.

\begin{defn}
	An exact sequence \index{exact sequence} of groups is a sequence of homomorphisms
	$$ \cdots  \longrightarrow G_{n+1}  \stackrel{\partial_{n+1}}{\longrightarrow} G_n  \stackrel{\partial_{n}}{\longrightarrow} G_{n-1}  \stackrel{\partial_{n-1}}{\longrightarrow} \cdots$$ 
	such that $Ker \, \partial_{n} = Im \, \partial_{n+1} $ for each $n$.
	
\end{defn} 
The sequence of groups and homomorphisms may be either finite or infinite. An infinite exact sequence is called a long exact sequence where the indices typically represent some geometric or algebraic structure. These sequences often arise from constructions like the homology or cohomology of spaces, extensions of groups and fibrations.
\begin{defn}[cf.\cite{Allen}]
	Let $I^n$ be the $n$ dimensional unit cube. For a space $X$ with base point $x_0 \in X$, $\pi_n(X,x_0)$ \index{$\pi_n(X,x_0)$} is the set of homotopy classes of maps $f: (I^n, \partial I^n) \rightarrow (X,x_0)$ where homotopies $f_t$ are required to satisfy $f_t(\partial I^n)= x_0$ for all $t$.
\end{defn}
Maps $(I^n, \partial I^n) \rightarrow (X,x_0)$ are the same as maps of the quotient $I^n/ \partial I^n= S^n$ to $X$ taking the base point $s_0 =  \partial I^n/ \partial I^n$ to $x_0$. Thus $\pi_n(X,x_0)$ can be viewed as the set of homotopy classes of maps $(S^n, s_0) \rightarrow (X, x_0)$, with homotopies taken through maps of the same type  $(S^n, s_0) \rightarrow (X, x_0)$.

\begin{defn}[cf.\cite{Allen}]
	A map $p: E \rightarrow B$ is said to have homotopy lifting property \index{homotopy lifting property} with respect to a space $X$, if given a homotopy $g_t : X \rightarrow B$ and a map $ \tilde{g}_0 : X \rightarrow E$ lifting $g_0$, so $ \tilde{g}_0 p= g_0$ then there exists a homotopy $ \tilde{g}_t : X \rightarrow E$ lifting $g_t$.
\end{defn}
\begin{defn}[cf.\cite{Allen}]
	A fibration is a map $p: E \rightarrow B$ having homotopy lifting property with respect to all spaces $X$.
\end{defn}

\begin{prop}[cf.\cite{Allen}]
	A fibre bundle $ p:E \rightarrow B$ has the homotopy lifting property with respect to all $CW$ pairs $(X, A)$. i.e., for any homotopy $g_t: X \rightarrow B$, given any lift $\tilde{g}_0 :A \rightarrow E$ of $g_0 : A \rightarrow B$ there exists lifted homotopy $\tilde{g}_t : X \rightarrow E$ such that $\tilde{g}_t \circ p = g_t$ and $\tilde{g}_{t|_A} = \tilde{g}_0, \forall t \in [0,1].$
\end{prop}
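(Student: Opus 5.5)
The plan is to prove the relative homotopy lifting property in two stages. First establish it for the basic pair $(D^k\times I,\ D^k\times\{0\})$, which is equivalent to $(I^k\times I,\ I^k\times\{0\}\cup \partial I^k\times I)$. Then pass to an arbitrary CW pair $(X,A)$ by induction over the cells of $X\setminus A$. Throughout I use local triviality: every $b\in B$ has an open neighbourhood $U$ and a homeomorphism $h:p^{-1}(U)\to U\times F$ commuting with the projections to $U$. I read the given data as a lift $\tilde g_0$ defined on $X\times\{0\}\cup A\times I$, which is what the statement requires since it asks for $\tilde g_t|_A$ to be prescribed.

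\emph{Step 1: the trivial case.} Suppose $p$ is a product projection $U\times F\to U$. A lift of $g_t$ is then the same thing as a map to $F$. Take a retraction $r: I^k\times I\to I^k\times\{0\}\cup\partial I^k\times I$. Define the lift as $(g_t(x),\ \phi(r(x,t)))$, where $\phi$ is the $F$-component of the given partial lift. This extends the given data and lies over $g_t$.

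\emph{Step 2: the cube $I^n$ in a general bundle.} The image of the compact set $I^n\times I$ under the homotopy $G(x,t)=g_t(x)$ is covered by trivializing open sets $U_\alpha$. By the Lebesgue number lemma, subdivide $I^n$ into small cubes $C$ and $I$ into intervals $[t_j,t_{j+1}]$ so that each $G(C\times[t_j,t_{j+1}])$ lies in a single $U_\alpha$. Then proceed inductively: over $t_j$ first, and within that stage over the skeleta of the subdivision of $I^n$, in order of increasing dimension. At each step a cube $C\times[t_j,t_{j+1}]$ has its lift already prescribed on $C\times\{t_j\}\cup\partial C\times[t_j,t_{j+1}]$. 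Transport that data through $h_\alpha$ and apply Step 1. The pieces glue continuously because they agree on overlaps and the cubes form a finite closed cover.

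\emph{Step 3: CW pairs.} Extend the lift over $X^n\cup A$ by induction on $n$. For each $n$-cell $e$ of $X\setminus A$ with characteristic map $\Phi:D^n\to X$, pull the problem back along $\Phi\times 1_I$. The lift is already given on $D^n\times\{0\}\cup\partial D^n\times I$, so Step 2 supplies the extension over $D^n\times I$. These extensions induce a continuous map on $(X^n\cup A)\times I$, since that space carries the quotient (weak) topology; this is where one uses that $I$ is compact, so that the product with $I$ preserves the CW topology. Taking the union over all $n$ gives a lift on $X\times I$ that restricts correctly on $A$.

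The main obstacle is Step 2. The subdivision has to be chosen and ordered so that at every stage the previously constructed lift is available on exactly the "bottom and sides" part of each small cube. Within a time slab, cubes sharing faces must be processed in order of increasing skeleton dimension so that the relative version of Step 1 applies. I would handle this by making the inductive ordering explicit, and if that is too cumbersome, I would cite the corresponding argument in \cite{Allen}.
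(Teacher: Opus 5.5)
The paper gives no proof of its own and simply cites \cite{Allen}; your argument (local trivializations, a Lebesgue-number subdivision of $I^n\times I$ into pieces $C\times[t_j,t_{j+1}]$ each mapping into one trivializing set, extension over each piece via a retraction onto $C\times\{t_j\}\cup\partial C\times[t_j,t_{j+1}]$, then cell-by-cell induction over $X\setminus A$) is essentially the standard proof given there, differing at most in how the time and skeleton inductions are nested, and it is correct. Your reading of the data as a lift on $X\times\{0\}\cup A\times I$ is the right one: the statement as printed, with $\tilde g_t|_A=\tilde g_0$ for all $t$, would force $g_t$ to be stationary on $A$, and for $A=\emptyset$ it would supply no starting lift at all.
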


A standard long exact sequence in homotopy theory comes from a fibration $F\stackrel{j}{\rightarrow} E \stackrel{p}{\rightarrow} B$, where $F$ is the fibre, $E$ is the total space, and $B$ is the base space. The associated sequence of homotopy groups is:
$$ \cdots  \longrightarrow \pi_{n+1}(B)   \longrightarrow \pi_n(F)   \stackrel{j^*}\longrightarrow \pi_n(E) \stackrel{p^*} \longrightarrow \pi_n(B) \stackrel{\partial} \longrightarrow \pi_{n-1}(F) \cdots$$ 
An important factor of long exact sequences is the presence of boundary maps, which connect different levels of the sequence. For instance, in the long exact sequence of a fibration, the boundary map $\partial : \pi_{n}(B)   \longrightarrow \pi_{n-1}(F)$ arises from the homotopy lifting property of the fibration.
We see how $\partial : \pi_n(B) \rightarrow \pi_{n-1}(F)$ is defined. 

Consider $[f] \in \pi_n(B)$ and $b_0 \in B$ such that $F= \pi^{-1}(b_0)$ and let $e_0 \in F$.  Then $f: S^n \rightarrow B$ which can be viewed as the map $f: (D^n, \partial D^n) \rightarrow B$ with $f(\partial D^n)= b_0$. Consider the homotopy $H(x,t):D^n \rightarrow B $ where $ H(x,t) = f(x) \quad \forall t$. Now $f_{|_{\partial{D^n}}} : \partial{D^n} \rightarrow \{b_0\} \subset B$. Define $G(x,t): \partial{D^n} \rightarrow  B$ by $g_t(x) = b_0 \quad \forall x, \forall t$. Then  $g_t(x) = f_{|_{\partial{D^n}}}(x)  \quad \forall t $. Now define $\tilde{G}(x,t):\partial D^n \rightarrow F $ by $\tilde{G}(x,t) = e_0 \quad \forall x, \forall t$. Clearly $\tilde{G}(x,0)$ is an extension of ${G}(x,0)$. Then there exists lifted homotopy of $H$, $\tilde{H}(x,t): D^n \rightarrow E $  with $\tilde{H}(x,t)_{|_{\partial{D^n}}}(x) = \tilde{G}(x,0) = \tilde{G}(x,t)$ and $\tilde{H} \circ p = H$.\\
Thus for $[f] \in \pi_n(B)$ , define $\partial([f])= [\tilde{G}(x,t)] \in \pi_{n-1}(F)$.

The category of long exact sequences arising from fibrations is another category of chains associated with fibre bundles.
Next we describe the morphism between two such long exact sequences.

Let $F_1\stackrel{j_1}{\rightarrow} E_1 \stackrel{p_1}{\rightarrow} B_1$ and $F_2 \stackrel{j_2}{\rightarrow} E_2 \stackrel{p_2}{\rightarrow} B_2$ be two fibrations. Consider the following long exact sequences arising from these fibrations:
$$ \cdots  \longrightarrow \pi_{n+1}(B_1)   \longrightarrow \pi_n(F_1) \stackrel{j_1^*}{\longrightarrow} \pi_n(E_1)  \stackrel{p_1^*}{\longrightarrow}\pi_n(B_1) \stackrel{\partial_1}{\longrightarrow} \pi_{n-1}(F_1) \cdots$$
and
$$\cdots  \longrightarrow \pi_{n+1}(B_2)   \longrightarrow \pi_n(F_2)  \stackrel{j_2^*}{\longrightarrow} \pi_n(E_2) \stackrel{p_2^*}{\longrightarrow} \pi_n(B_2) \stackrel{\partial_2}{\longrightarrow} \pi_{n-1}(F_2) \cdots$$
where $j_i^*$ is the map induced by $j_i: F_i \rightarrow E_i$, $p_i^*$ is the map induced by the projection $p_i: E_i \rightarrow B_i$ and $\partial_i$ is the boundary map defined using lifting property of fibration  $F_i \stackrel{j_i}{\rightarrow} E_i \stackrel{p_i}{\rightarrow} B_i$ (see \cite{Allen}).
If there is a bundle map $\Psi:  (E_1,p_1,B_1,F_1) \rightarrow (E_2,p_2,B_2,F_2) $, then it induces a map $\Psi ^*$ between above two long exact sequences. The map between the two long exact sequences is seen as commutative diagram below: \\
\begin{center}
	\begin{tikzcd}
	\cdots     \arrow[" "]{r}  	& \pi_n(F_1) \arrow[" j_1^*"]{r} \arrow{d}[swap]{\Psi_F^*} & \pi_n(E_1) \arrow[" p_1^*"]{r} \arrow{d}{\Psi_E^*}& \pi_{n}(B_1)\arrow["\partial_1 "]{r} \arrow{d}[swap]{\Psi_B^*} & \pi_{n-1}(F_1) \arrow{d}{\Psi_F^*} \arrow[" "]{r}& \cdots\\
	\cdots   \arrow [" "]{r}  & \pi_n(F_2) \arrow[" j_2^*"]{r} &  \pi_n(E_2) \arrow[" p_2^*"]{r}& \pi_{n}(B_2) \arrow["\partial_2 "]{r} & \pi_{n-1}F_2)\arrow[" "]{r}  & \cdots   
	\end{tikzcd}
\end{center}
$\Psi_F^*, \Psi_E^* $ and $\Psi_B^*$ are the maps induced by $\Psi_F, \Psi_E $ and $\Psi_B$ respectively and the boundary maps $\partial_1$ and $\partial_2$ satisfy the compatibility condition: 
$$ \partial_1 \circ \Psi_F^* = \Psi_B^* \circ \partial_2.$$
Choosing long exact sequences associated with fibrations as objects and morphism between them induced by the bundle map, we obtain category of long exact sequences arising from fibrations.\\
Next we define subfibration and see that above category is a category with subobjects.  
Let $F\stackrel{j}{\rightarrow} E \stackrel{p}{\rightarrow} B$ be a fibration. Then $F'\stackrel{j'}{\rightarrow} E' \stackrel{p'}{\rightarrow} B'$ is a subfibration if $E' \subseteq E, B' \subseteq B, F' \subseteq F, $ and $ p' = p|_{E'}: E' \rightarrow B'$ is also a fibration. Then the long exact sequence corresponding to $F'\stackrel{j'}{\rightarrow} E'$ will be a subchain of long exact sequence corresponding to $F\stackrel{j}{\rightarrow} E \stackrel{p}{\rightarrow} B$. Choosing "being subchain" as the subobject relation, the category of long exact sequences arising from fibrations forms a category of chains with subobjects.
\\
  \textbf{Acknowledgement}
  The first author thank Council of Scientific and Industrial Research (CSIR), India for the support.

\end{document}